\newtheorem{theorem}{Theorem}[section]
\newtheorem{proposition}[theorem]{Proposition}
\theoremstyle{definition}
\newtheorem*{definition*}{Definition}
\newtheorem*{example*}{Example}
\newtheorem*{remark*}{Remark}
\newtheorem*{question*}{Question}
\newtheorem*{problem*}{Problem}
\newtheorem*{note*}{Note}
\newtheorem*{claim*}{Claim}
\newcommand{\R}{\mathbf{R}}
\newcommand{\Z}{\mathbf{Z}}
\newcommand{\ra}{\rightarrow}
\newcommand{\tr}{\mathrm{tr\,}}
\title[Yang-Mills Connections]{Yang-Mills Connections on Surfaces and Representations of the Path Group}
\author{Kent E. Morrison}
\address{Department of Mathematics, California Polytechnic State University, San Luis Obispo, California 93407}
\curraddr{American Institute of Mathematics, 360 Portage Avenue, Palo Alto, California 94306}
\email{morrison@aimath.org}
\thanks{Proc. Amer. Math. Soc. \textbf{112} (1991) 1101--1106.}
\subjclass{Primary 58E15; Secondary 53C07, 58D27}
\keywords{Yang-Mills connections, holonomy, path group.}
\begin{document}
\large
\renewcommand{\baselinestretch}{1.2}   %line spacing
\normalsize
\begin{abstract}We prove that Yang-Mills connections on a surface are characterized as those with the property that the holonomy around homotopic closed paths only depends on the oriented area between the paths. Using this we have an alternative proof for a theorem of Atiyah and Bott that the Yang-Mills connections on a compact orientable surface can be characterized by homomorphisms to the structure group from an
extension of the fundamental group of the surface. In addition for $M = S^2$ we obtain the results that the Yang-Mills connections on $S^2$ are isolated and correspond with the
conjugacy classes of closed geodesics through the identity in the structure group.
\end{abstract}
\maketitle
\vspace{1cm}

In 1954 Kobayashi [K] showed that connections on principal $G$-bundles over a manifold $M$ can be defined in terms of their parallel transport as homomorphisms from the group of closed paths of $M$ to the structure group $G$. More recently Atiyah and Bott [AB] showed that the Yang-Mills connections on a compact orientable surface can be characterized by homomorphisms to $G$ from an extension of the fundamental group of $M$. The purpose of this paper is to present a new proof of the result of Atiyah and Bott, using the path group formulation for connections. We show that Yang-Mills connections are characterized as those with the property that the holonomy around homotopic closed paths only depends on the oriented area between the paths. In addition for $M = S^2$ we easily obtain the result that the Yang-Mills connections on $S^2$ are isolated, a result obtained in [FH] by other means. We can see, too, that the equivalence classes of Yang-Mills connections on $S^2$ are in one- to-one correspondence with the conjugacy classes of closed geodesics of $G$ through the identity. This was described in the introduction of [AB] and worked out in detail in [FH].
\pagebreak

\section{The path group description of connections}
Fix a base point $x_0$ in $M$. The path group is defined with reference to the base point but up to isomorphism it is independent of the base point. The path group consists of equivalence classes of closed, piecewise smooth paths of $M$ where a closed path is the oriented image of a piecewise smooth, base point preserving map from $S^1$ to $M$. It may be thought of as the loop space with the operations of concatenation and reversal to make it into a group. A path may be reparameterized by choosing a different map with the same image as long as the orientation does not change. The group operation is concatenation so that $\lambda_1\lambda_2$ means ``traverse $\lambda_2$ and then traverse $\lambda_1$.'' The inverse of a path $\lambda$ is got by reversing the orientation. 
This means that $\lambda\lambda^{-1}$ must be identified with the constant path at
$x_0$ . It also follows that if $\lambda^*:[0,1]  \ra M$ is a parameterization of $\lambda$ with the property that there exist $0 \le t_1 \le t_2 \le t_3 \le 1$ such that $\lambda^*$ restricted to $[t_1,t_2]$ has exactly the same image as $\lambda^*$ restricted to $[t_2,t_3]$ but with opposite orientations, then $
\lambda$ is equal in the path group to the path parameterized by  
\[
[0,1] \ra M:t \mapsto 
       \begin{cases}\lambda^*(t),\quad  &0 \le t \le t_1, \\ 
                              \lambda^*(t_1), \quad &t_1 \le t \le t_3, \\
                              \lambda^*(t), \quad & t_3 \le t \le 1.
        \end{cases}
\]
The retraced piece of the path has been clipped.

Let $\Phi(M)$ denote the path group. It is a topological group with the compact-open topology. The fundamental group $\pi_1(M)$ is the group of path components of $\Phi(M)$. The main result of [K] is the following.

\begin{theorem} The equivalence classes of connections on principal $G$-bundles over $M$ are in one-to-one correspondence with the conjugacy classes of continuous homomorphisms from $\Phi(M)$ to $G$.
\end{theorem}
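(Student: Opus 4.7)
The plan is to prove both directions of the correspondence, following Kobayashi's original argument. Given a connection on a principal $G$-bundle $P\to M$, I will construct a continuous homomorphism $\Phi(M)\to G$ by holonomy; conversely, given a continuous homomorphism, I will reconstruct a principal bundle with connection whose holonomy realizes it.

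For the forward direction, fix a reference point $p_0$ in the fiber of $P$ over $x_0$. For each closed path $\lambda$ based at $x_0$, horizontally lift $\lambda$ starting at $p_0$ and define $h(\lambda)\in G$ by the relation $\widetilde{\lambda}(1)=p_0\cdot h(\lambda)$. This $h$ is a group homomorphism because parallel transport composes under concatenation; it descends to $\Phi(M)$ because parallel transport along a retraced segment is inverted by parallel transport along its reverse, so clipping does not affect $h(\lambda)$; and it is continuous in the compact-open topology because parallel transport depends smoothly on its path argument. Replacing $p_0$ by $p_0\cdot g$ conjugates $h$ by $g$, and any bundle isomorphism intertwining two connections is fiberwise $G$-equivariant and so is determined over $x_0$ by where it sends $p_0$, from which equivalent connections yield conjugate homomorphisms.

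For the reverse direction, given a continuous homomorphism $h:\Phi(M)\to G$, let $PM$ denote the set of piecewise smooth paths in $M$ starting at $x_0$, modulo clipping. The endpoint map $PM\to M$ carries a free, transitive action of $\Phi(M)$ on each fiber by concatenation of loops at $x_0$, making it formally a principal $\Phi(M)$-bundle. Form the associated $G$-bundle $P_h=(PM\times G)/{\sim}$, where $(\alpha\lambda,g)\sim(\alpha,h(\lambda)g)$ for each loop $\lambda\in\Phi(M)$ (with $\alpha\lambda$ the path-group concatenation). Define a connection on $P_h$ by declaring the horizontal lift of a path $\gamma$ starting at $[(\alpha,g)]$ to be $t\mapsto[(\gamma_{[0,t]}\alpha,g)]$, where $\gamma_{[0,t]}\alpha$ is the concatenation of $\alpha$ with the initial portion of $\gamma$. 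A direct computation starting from $[(e_{x_0},e)]$ shows that the holonomy around any loop $\lambda$ is exactly $h(\lambda)$, and a change of reference point in the fiber over $x_0$ yields a conjugate homomorphism and an equivalent bundle-with-connection.

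The main obstacle is upgrading the reconstruction from a purely topological object to a smooth principal $G$-bundle with smooth connection. One must exhibit smooth local trivializations of $P_h$ over neighborhoods of each $x\in M$ by selecting smoothly varying families of paths from $x_0$ to points in the neighborhood, and then verify that the induced transition functions and connection form are smooth. Continuity of $h$ is precisely what makes this succeed, since a smoothly varying path gives rise to a continuously varying holonomy element, but carrying the smoothness check out in detail is the technical heart of the argument.
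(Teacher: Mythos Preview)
Your argument is correct and follows essentially the same approach as the paper's outline: holonomy at a fixed basepoint gives the forward map, and the associated-bundle construction $P_h=(PM\times G)/\Phi(M)$ together with the tautological horizontal lift $t\mapsto[(\gamma_{[0,t]}\alpha,g)]$ gives the reverse. The paper presents this as a sketch and does not raise the smooth-structure issue you flag at the end, but otherwise the two constructions coincide line for line.
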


\begin{proof} This is an outline of the main steps. From a principal $G$-bundle with a connection the parallel transport around closed paths based at $x_0$ defines a homomorphism
$\rho:\Phi(M) \ra G$ called the holonomy. However, equivalent connections define holonomy maps which are conjugate ($\rho_1$ and $\rho_2$ are conjugate if there exists $g \in G$ so that
$\rho_1(\lambda) = g\rho_2(\lambda)g^{-1}$ for all $\lambda \in \Phi(M)$). In the other direction, given $\rho:\Phi(M) \ra G$ we construct a principal $G$-bundle $P$ and a connection as follows: let $S(M)$ be the space of open paths based at $x_0$, i.e., oriented images of piecewise smooth maps $\sigma :[0,1] \ra M$ with
$\sigma(0) = x_0$.  Now $\Phi(M)$ acts on the right on $S(M)$ since $\sigma\lambda$ is an open path based at $x_0$. (First traverse the closed path $\lambda$ and then $\sigma$.) Define an action of $\Phi(M)$ on $S(M) \times G$ by $(\sigma,g)\cdot \lambda = (\sigma\lambda,\rho(\lambda)^{-1}g)$. Define $P := (S(M) \times G) / \Phi(M)$ to be the orbit space of this action. In fact, $S(M)$ is a principal $\Phi(M)$-bundle over $M$, where the projection
$S(M) \ra M$ maps a path to its endpoint, and $P$ is just the principal $G$-bundle arising from the homomorphism of structure groups $\rho:\Phi(M) \ra G$. To describe the connection on $P$
we describe the horizontal lift of any curve $c:[0,1]\ra M$ with an initial point $p_0 \in P_{c(0)}$.
Let $c(0) = y_0$ , $c(1) = y_1$, and suppose $p_0$ is the equivalence class $[(\sigma_0,g)]$ where $\sigma_0$ is a path from $x_0$ to $y_0$ and $g \in G$. For each $t \in [0,1]$ define the path $\sigma_t$ to be $\sigma_0$ followed by the segment of $c$ between $c(0)$ and $c(t)$. Then the horizontal lift of the curve $c$ is $\tilde{c} : [0,1] \ra P : t \mapsto [(\sigma_t , g)]$. It is straightforward to check that $\tilde{c}$ is well-defined and that the holonomy around a closed path $\lambda$ is given by $\rho(\lambda)$.
\end{proof}

\section{Yang-Mills connections}
Put a Riemannian metric on $M$. Let $P \ra M$ be a principal $U(n)$-bundle. (More generally we could consider $G$-bundles for any compact Lie group $G$; the proofs that follow will work in that generality, but we will explicitly use $G = U(n)$ in this paper.) Let
$\langle X,Y \rangle:= \tr XY^*$ be the invariant inner product on the Lie algebra $\mathfrak{u}(n)$ of skew-Hermitian matrices.

These are the ingredients necessary to define the Yang-Mills functional on the space of unitary connections of $P \ra M$, namely the integral over $M$ of the norm squared of the curvature. The Yang-Mills connections are, by definition, the critical points of the Yang- Mills functional. Flat connections are zeros of the functional and are obviously the absolute minima on a bundle admitting flat connections.
The Euler-Lagrange equations asserting that the derivative of the Yang-Mills functional vanishes at a critical connection is the first of the Yang-Mills equations:
\[ \begin{cases}  d_A \,*\!F ( A ) = 0  \\ d_A \; F(A)=0 \end{cases}  \]
The second is the Bianchi identity that holds for all connections. The notation here is that of [AB]: $A$ is a connection, $d_A$ its covariant derivative, $F(A)$ the curvature of $A$, which is a section in $\Omega^2(M,\mathrm{ad} P)$, and $*$ is the Hodge star operator
\[  *:\Omega^k(M,\mathrm{ad\,} P) \ra \Omega^{n-k}(M,\mathrm{ad\,} P) \]
extended from scalar forms to $\mathrm{ad\,} P$-valued forms using the invariant inner product of $\mathfrak{u}(n)$.

Define the subgroup $\Phi_\omega(M) \subset \Phi(M)$ to consist of the contractible paths enclosing area zero. More precisely, $\sigma$ is in $\Phi_\omega(M)$ when $\int_S \omega = 0$ where $S$ is the interior of $\sigma$. The quotient group $\Phi(M) / \Phi_\omega(M)$ is the group of equivalence classes of closed paths; two
paths are equivalent if they are homotopic and the area between them is zero. The main result of this paper is the following characterization of Yang-Mills connections by their holonomy.

\begin{theorem}The gauge equivalence classes of Yang-Mills connections on all principal $U(n)$-bundles over $M$ are in one-to-one correspondence with the conjugacy classes of
homomorphisms from $\Phi(M) / \Phi_\omega(M)$ to $U(n)$.
\end{theorem}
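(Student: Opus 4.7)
The plan is to leverage Theorem 1.1 to reduce the statement to a question purely about holonomy. By Theorem 1.1, gauge equivalence classes of connections on principal $U(n)$-bundles over $M$ correspond bijectively to conjugacy classes of continuous homomorphisms $\rho:\Phi(M)\to U(n)$. A homomorphism $\rho$ descends to the quotient $\Phi(M)/\Phi_\omega(M)$ precisely when it is trivial on $\Phi_\omega(M)$, which is equivalent to the property $\rho(\gamma_1)=\rho(\gamma_2)$ for any two homotopic paths bounding zero net oriented area. Hence the theorem reduces to showing that the connection $A$ determined by $\rho$ is Yang-Mills if and only if its holonomy has this ``area-only'' property.

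To handle this reduced problem, I would use the fact that since $M$ is two-dimensional, any $\mathrm{ad}\,P$-valued $2$-form is of the form $s\cdot\omega$ where $\omega$ is the Riemannian area form and $s$ is a section of $\mathrm{ad}\,P$, and that under this identification $*F(A)=s$. The Yang-Mills equation $d_A*F(A)=0$ then reads simply: $*F(A)$ is covariantly constant as a section of $\mathrm{ad}\,P$, with the second equation being automatic. So the theorem comes down to showing that $*F(A)$ is parallel if and only if the holonomy around every contractible loop depends only on the enclosed oriented area.

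For the forward direction I would invoke a non-abelian Stokes theorem, expressing the holonomy $\rho(\partial D)$ around a contractible loop as a limit of ordered products of small-subdisk holonomies, each of which equals $\exp(-\int_{D_i}F_{pt})+O(|D_i|^2)$ with $F_{pt}$ the curvature parallel-transported back to $x_0$. When $*F(A)$ is covariantly constant, this parallel-transported curvature is independent of the path of transport and equals $X_0\cdot\omega$ for the single fixed element $X_0=*F(A)(x_0)\in\mathfrak{u}(n)$. The ordered exponentials then commute, the product integral collapses to $\exp(-X_0\cdot\int_D\omega)$, and the holonomy depends only on the enclosed area. For the converse, one expands the holonomy of small loops $\gamma_{p,a}$ of area $a$ based at a point $p$, obtaining $I-a\,*F(A)(p)+O(a^2)$ in a local trivialization; comparing with loops of the same area based at $x_0$ via a joining path $\sigma$ gives $\mathrm{Ad}(\rho(\sigma))(*F(A)(p))=*F(A)(x_0)$, which is exactly the assertion that $*F(A)$ is preserved by every parallel transport, i.e., $d_A*F(A)=0$.

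The main obstacle will be discharging the non-abelian Stokes step rigorously; in general the $O$-errors in the small-loop expansion, together with the non-commutativity of parallel-transported curvatures at distinct subdisks, create Baker--Campbell--Hausdorff corrections that must be controlled. The saving feature here is that when $*F(A)$ is parallel, all parallel-transported curvatures lie on a single line through the origin in $\mathfrak{u}(n)$ and hence commute, so the product integral reduces cleanly to an abelian exponential and no higher corrections survive in the limit.
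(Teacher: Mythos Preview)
Your reduction via Theorem 1.1 and the identification of the Yang--Mills equation on a surface with the condition that $*F(A)$ be covariantly constant are exactly what the paper does as well. Where you diverge is in the bridge between ``$*F(A)$ parallel'' and ``holonomy depends only on enclosed area.'' For the forward implication you invoke a non-abelian Stokes/product-integral argument, rescued by the observation that all parallel-transported curvatures equal the single element $X_0$ and hence commute; for the converse you run a pointwise small-loop expansion and compare via conjugation by an arc $\sigma$. The paper instead interposes a structural step: it first decomposes the connection into irreducible summands and shows, using the fact that a covariantly constant section of $\mathrm{ad}\,P$ lies in the Lie algebra of the stabilizer of the holonomy representation, that for an irreducible Yang--Mills connection $*F(A)=i\lambda I_n$ is actually \emph{central}. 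With central curvature the Ambrose--Singer Holonomy Theorem gives a one-dimensional restricted holonomy group, and ordinary abelian Stokes yields $\rho(\sigma)=\exp\bigl(i\lambda\,\mathrm{area}(S)\bigr)I_n$ directly. In the other direction the paper exploits the isomorphism $\Phi_0(M)/\Phi_\omega(M)\cong\R$ (or $U(1)$ for $S^2$) to view the restricted holonomy as a genuine one-parameter subgroup $t\mapsto\exp(t\Lambda)$, and then reads off the curvature as its infinitesimal generator via the parallelogram limit. Your route is more uniform---no splitting into irreducibles, no case distinction on genus---but purchases this at the cost of the non-abelian Stokes step you flag; the paper's route stays within classical tools (Holonomy Theorem plus abelian Stokes) by paying the price of the irreducible decomposition. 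Both are sound; it is worth noting that the decomposition is, in effect, the simultaneous diagonalization of the holonomy and the parallel section $X_0$ that makes your commuting-curvatures observation manifest.
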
   

\begin{proof} First we prove that for an irreducible connection the following two statements are equivalent:
\begin{enumerate}
\item$A$ is a Yang-Mills connection on a $U(n)$-bundle. 
\item $F(A) = i\lambda I_n \otimes \omega$ for some $\lambda \in \R$.
\end{enumerate}

(1) $\Rightarrow$ (2)\; Consider an irreducible connection $A$ satisfying $d_A\, *F(A) = 0$, with holonomy representation $\rho: \Phi(M) \ra U(n)$, also irreducible. Then $*F(A) \in \Omega^0(M, \mathrm{ad\,} P)$ is an infinitesimal automorphism of the connection $A$; that means it is in the Lie algebra of the automorphism group of $A$, but that is just the subgroup of $U(n)$ that stabilizes the representation $\rho$ under conjugation. (More precisely the two groups are identified because a covariant constant section is determined by its value at a single point of $M$.) Since $\rho$ is irreducible its stabilizer is the subgroup isomorphic to $U(1)$ consisting of the scalar multiples of the identity, the center of $U(n)$. This means $*F(A)$ has the form $i\lambda I_n$ for some $\lambda \in \R$. Assume that the area element $\omega$ has been normalized to have total area 1. Then $F(A) = i\lambda I_n \otimes \omega \in \Omega^2(M,\mathrm{ad\,}P)$.

(2) $\Rightarrow$ (1)\; $d_A\,*\!F(A)=d_A*\!(i\lambda I_n \otimes \omega) = d_A(i\lambda I_n)=0$.

A connection on a $U(n)$-bundle can be split into a direct sum of irreducible connections. The holonomy representation, the curvature and the Hodge star operator also split in the same way, so that a connection is Yang-Mills if and only if each of its irreducible components is Yang-Mills. Next we show the equivalence of the following three statements for a connection $A$ on a $U(n)$-bundle.
\begin{enumerate}
\setcounter{enumi}{2}
\item The holonomy of $A$ factors through $\Phi(M) / \Phi_\omega(M)$.
\item  $F(A) = \Lambda \otimes \omega$ for some $\Lambda \in \mathfrak{u}(n)$.
\item $A$ is a Yang-Mills connection.
\end{enumerate} 

(3) $\Rightarrow$ (4)\;  Let $\Phi_0(M) \subset \Phi(M)$ be the subgroup of contractible paths. This subgroup is the connected component of the identity and contains the subgroup $\Phi_\omega(M)$. When the genus of $M$ is positive, the quotient group $\Phi_0(M) /\Phi_\omega(M)$ is isomorphic to $\R$, since the enclosed area characterizes each coset. When $M=S^2$ the quotient is isomorphic to $U(1)$. (See Theorem 2.3.) In either case the restricted holonomy is a homomorphism
$\overline{\rho}:\Phi_0(M) /\Phi_\omega(M) \ra U(n)$ and hence describes a one-parameter subgroup of $U(n)$. Let
$\Lambda \in \mathfrak{u}(n)$ be the infinitesimal generator of the one-parameter subgroup $\overline{\rho}$, so that
$\overline{\rho}(t) = \exp(t\Lambda)$.  For a matrix group $\Lambda = \lim_{t \ra 0}(\rho(t)-I)/t$. We will show that $F(A) = \Lambda \otimes \omega$.

At $x_0 \in M$ let $u, v$ be tangent vectors. Recall that $F(A)(u,v)$ is given by infinitesimal parallel translation around the rectangle spanned by $u$ and $v$. Choose coordinates $(x_1, x_2 )$ so that the area form $\omega = dx_1 \wedge dx_2$ in a neighborhood of $x_0$ (Darboux's Theorem). For each $t$ in some interval containing $0$, define $\sigma_t$ to be the closed parallelogram spanned by $tu$ and $tv$. Then $F(A)(u,v) = \lim_{t \ra 0} (\rho(\sigma_t ) - I)/t^2$ . Now $\rho(\sigma_t ) = \overline{\rho}(\omega(tu,tv)) = \overline{\rho}(t^2 \omega(u,v))$, since the area of the parallelogram spanned by $u$ and $v$ is $\omega(u,v)$. Making the substitution $s = t^2 \omega(u,v)$, the limit becomes
\[ \lim_{s \ra 0} \frac{\overline{\rho}(s) - I}{s}\omega(u,v) = \Lambda \omega(u,v).\] Therefore $F(A) = \Lambda \otimes \omega$.

(4) $\Rightarrow$ (5) \; $d_A\,*\!F(A)=d_A\,*\!(\Lambda \otimes \omega) =d_A(\Lambda)=0$.

(5) $\Rightarrow$ (3)\;  By splitting into irreducible components we may assume that $A$ is an irreducible Yang-Mills connection and hence that $F(A) = i\lambda I_n \otimes \omega$ for some $\lambda \in \R$ by the
equivalence of (1) and (2) above. The formula shows that the span of the curvature $\{F(A)(u,v) | u,v \in T_{x_{\raisebox{-2pt}{\tiny 0}}} M\}$ is the multiples of the identity, and hence by the Holonomy
Theorem [KN], the Lie algebra of the restricted holonomy group is isomorphic to $\R$. 
The holonomy around a contractible path $\sigma : [0,1] \ra M$ is $\exp( \int_0^1 \sigma^* A) = \exp(\int_\sigma A) = \exp(\int_S dA)$ by StokesÕ Theorem, where $\sigma$ is the boundary of $S$. But $dA$ is the curvature of $A$ since the holonomy around $\sigma$ is $\exp(\int_S F(A)) = \exp(i \lambda \,\mathrm{area}(S))\cdot I_n$ . If $\sigma \in \Phi_\omega(M)$, then $\mathrm{area}(S)=0$ and so $\sigma \in \mathrm{Ker}\, \rho$.
\end{proof}

Atiyah and Bott have shown that the Yang-Mills connections on a Riemann surface $M$ can be characterized as coming from representations of a central extension of $\pi_1(M)$. A brief summary of what is contained in \S 6 of [AB] is the following.
Let $M$ be a compact orientable surface of genus $g \ge 1$ endowed with a Riemannian metric (the complex structure of the Riemann surface is not used). The fundamental group
$\pi_1(M)$ has the standard presentation using $2g$ generators $\alpha_1,\dots,\alpha_g,\beta_1,\dots,\beta_g$ and one relation $\prod_{i=1}^g[\alpha_i,\beta_i]=1$. Define the central extension of $\pi_1(M)$ by $\Z$ using an additional generator $J$ that commutes with the generators $\alpha_i$, $\beta_i$ and satisfies the relation
$\prod_{i=1}^g[\alpha_i,\beta_i]=J$.
The subgroup generated by $J$ is isomorphic to $\Z$ and is the normal
subgroup of this extension, which is denoted by $\Gamma$. Thus we have the exact sequence
\[ 0 \ra Z \ra \Gamma \ra \pi_1(M) \ra 0. \]
Now extend the center of $\Gamma$ from $\Z$ to $\R$ to obtain the group $\Gamma_{\R}$ that fits into the exact sequence
\[ 0\ra \R\ra \Gamma_{\R} \ra \pi_1(M) \ra 0.\]

\begin{theorem}$\,$ If $M$ is a surface of genus $g \ge 1$, then the groups  $\;\Gamma_{\R}\,$ and $\;\Phi(M)/\Phi_\omega(M)\;$ are isomorphic.
\end{theorem}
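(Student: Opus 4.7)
The plan is to view both $\Gamma_{\R}$ and $\Phi(M)/\Phi_\omega(M)$ as central extensions of $\pi_1(M)$ by $\R$, define a homomorphism on generators compatible with both extension structures, and conclude via the Five Lemma.

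The first step is to place $\Phi(M)/\Phi_\omega(M)$ into a short exact sequence. Since $\Phi_\omega(M) \subset \Phi_0(M)$, we have
\[ 0 \ra \Phi_0(M)/\Phi_\omega(M) \ra \Phi(M)/\Phi_\omega(M) \ra \pi_1(M) \ra 0. \]
I would identify $\Phi_0(M)/\Phi_\omega(M)$ with $\R$ via the enclosed-area map $\sigma \mapsto \int_S \omega$, where $\sigma$ bounds the immersed disk $S$. Well-definedness uses that $M$ is aspherical for $g \ge 1$, so $\pi_2(M) = 0$ and the choice of null-homotopy is irrelevant; injectivity is by definition of $\Phi_\omega(M)$; surjectivity uses small disks together with iterated copies of the fundamental-polygon loop to realize every real area. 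I would also verify centrality of this $\R$-subgroup in $\Phi(M)/\Phi_\omega(M)$: for any path $\lambda$ and any contractible $\sigma$, the conjugate $\lambda\sigma\lambda^{-1}$ encloses the same area as $\sigma$, since the out-and-back portion along $\lambda$ contributes zero area.

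The second step is to construct the map. Choose smooth representatives $\tilde\alpha_i, \tilde\beta_i \in \Phi(M)$ lifting the standard generators $\alpha_i,\beta_i$ of $\pi_1(M)$, and normalize $\omega$ so that $M$ has total area $1$. The loop $\prod_{i=1}^g [\tilde\alpha_i, \tilde\beta_i]$ is contractible and bounds the identified fundamental $4g$-gon, so it has enclosed area $1$ under the identification above. Define $\phi: \Gamma_{\R} \ra \Phi(M)/\Phi_\omega(M)$ by sending $\alpha_i, \beta_i$ to the classes $[\tilde\alpha_i], [\tilde\beta_i]$ and each $t$ in the central $\R$-factor to the class of any contractible loop of enclosed area $t$. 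The defining relation $\prod [\alpha_i,\beta_i] = J$ (with $J$ identified with $1 \in \R$) holds by construction, and centrality of the $\R$-factor is inherited from Step 1, so $\phi$ is a well-defined homomorphism.

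Finally, $\phi$ fits into a morphism of short exact sequences whose leftmost vertical map is the identity on $\R$ by construction and whose rightmost vertical map is the identity on $\pi_1(M)$ since the chosen lifts project to the correct generators. The Five Lemma then forces $\phi$ to be an isomorphism. The main obstacle is the first step, the identification $\Phi_0(M)/\Phi_\omega(M) \cong \R$: the well-definedness of the area map depends on $\pi_2(M) = 0$ (which fails on $S^2$, producing instead the $U(1)$-quotient mentioned in the paper), and surjectivity onto all of $\R$ is where the assumption $g \ge 1$ is essential, since only a positive-genus surface has enough handles to realize arbitrarily large signed enclosed areas via iterated commutators of lifted generators.
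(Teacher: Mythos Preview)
Your proposal is correct and follows essentially the same approach as the paper: both exhibit $\Phi(M)/\Phi_\omega(M)$ as a central extension of $\pi_1(M)$ by $\R$ via the enclosed-area map, identify the class of $\prod_i[\alpha_i,\beta_i]$ with $1\in\R$ using the fundamental $4g$-gon, and conclude that this extension coincides with $\Gamma_\R$. Your version is simply more explicit---checking centrality, invoking $\pi_2(M)=0$ for well-definedness, and appealing to the Five Lemma---where the paper is content to say the two extensions are built ``in exactly the same way.''
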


\begin{proof} The natural projection $\Phi(M)/\Phi_\omega(M) \ra \pi_1(M)$ has kernel $\Phi_0(M)/\Phi_\omega(M)$ isomorphic to $\R$. Viewing $M$ as being constructed from the identification of the edges of a $4g$-sided polygon, with the edges labeled by the generators according to the relation, we see that the contractible path $\prod_{i=1}^g[\alpha_i,\beta_i]$ encloses the total area of $M$, normalized to be 1, and
so its class in $\Phi_0(M)/\Phi_\omega(M)$ is identified with $1 \in \R$. Thus $\Phi(M)/\Phi_\omega(M)$ is an
extension of $\pi_1(M)$ by $\R$ in exactly the same way that $\Gamma_{\R}$ is constructed by Atiyah and Bott.
\end{proof}

\begin{theorem} (Genus zero) The group $\Phi(S^2 )/\Phi_\omega(S^2)$ is isomorphic to $U(1)$. 
\end{theorem}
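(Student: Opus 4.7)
The plan is to construct an explicit surjective homomorphism $A : \Phi(S^2) \to \R/\Z$ with kernel $\Phi_\omega(S^2)$, and then identify $\R/\Z$ with $U(1)$ via $t \mapsto e^{2\pi i t}$. The conceptual reason we get $U(1)$ rather than $\R$ (as in the positive-genus case) is topological: a contractible loop in $S^2$ bounds not a unique disk but two, so its ``enclosed area'' is well-defined only modulo the total area of $S^2$, which has been normalized to $1$.

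First, since $\pi_1(S^2) = 0$, every closed path is contractible, so $\Phi(S^2) = \Phi_0(S^2)$. For $\sigma \in \Phi(S^2)$, choose any piecewise smooth $F : D^2 \to S^2$ with $F|_{\partial D^2} = \sigma$ and set
\[
A(\sigma) := \int_{D^2} F^*\omega \pmod 1.
\]
To see this is independent of the filling, given two fillings $F_1,F_2$ I would glue them (with opposite orientations along $\sigma$) to a piecewise smooth map $\tilde F : S^2 \to S^2$ and apply the degree formula:
\[
\int_{D^2} F_1^*\omega - \int_{D^2} F_2^*\omega = \int_{S^2} \tilde F^*\omega = \deg(\tilde F) \int_{S^2}\omega = \deg(\tilde F) \in \Z.
\]
Thus $A : \Phi(S^2) \to \R/\Z$ is a well-defined function.

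The remaining verifications are: (i) $A$ is a group homomorphism under concatenation, since fillings of two loops may be glued at $x_0$ into a filling of the concatenation; (ii) $A$ is continuous and surjective, because small loops around $x_0$ enclose arbitrarily small areas and by continuously growing such a loop one attains every value in $[0,1)$; and (iii) $\ker A = \Phi_\omega(S^2)$. The inclusion $\Phi_\omega(S^2) \subseteq \ker A$ is immediate from the definition. For the converse, if $A(\sigma)=0$ then an arbitrary filling $F$ has integer area $n$; I would then attach inside an interior disk of $D^2$ a degree $-n$ ``bubble'' map to $S^2$, producing a new filling of area exactly $0$, which realizes $\sigma\in\Phi_\omega(S^2)$.

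The step I expect to be the main obstacle is the careful treatment of the two-sided ambiguity of the ``interior'' of a loop in $S^2$, which is exactly the phenomenon forcing the quotient to be $U(1)$ rather than $\R$; it is encoded precisely in the degree computation used for well-definedness of $A$. A subsidiary technical point is the ``bubble'' construction needed for $\ker A \subseteq \Phi_\omega(S^2)$, which amounts to the observation that any integer shift of the enclosed area may be realized by wrapping a small interior disk of $D^2$ around $S^2$ with appropriate degree. Once these are in hand, the identification $\R/\Z \cong U(1)$ finishes the proof.
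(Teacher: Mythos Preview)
Your proof is correct and rests on the same key insight as the paper's: on $S^2$ the ``enclosed area'' of a loop is only well-defined modulo the total area (normalized to $1$), so the quotient is $\R/\Z \cong U(1)$ rather than $\R$. The paper's own argument is much terser---it simply notes that the interior of $\sigma$ is the exterior of $\sigma^{-1}$, so the trivial element $\sigma\sigma^{-1}$ encloses total area $1$, which must therefore be identified with $0$. Your explicit construction of the area homomorphism $A$, the degree-theoretic check of well-definedness, and the bubble argument for $\ker A \subseteq \Phi_\omega(S^2)$ flesh out rigorously what the paper leaves as a one-line observation; in particular, your degree computation is the general mechanism behind the paper's complementary-region remark.
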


\begin{proof} On the sphere all paths are contractible so that $\Phi_0(S^2 ) = \Phi(S^2)$. The interior of the path $\sigma$ is the exterior of $\sigma ^{-1}$ and so the area enclosed by $\sigma\sigma ^{-1}$, which is the total area of 1, must also be considered as area 0. Therefore the quotient group $\Phi(S^2)/\Phi_\omega(S^2)$ is not isomorphic to $\R$ as it is for surfaces of positive genus, but instead is isomorphic to
$\R/\Z$ or $U(1)$.
\end{proof}

From this we recover the results of [FH] that the gauge equivalence classes Yang-Mills connections on $S^2$ are in one-to-one correspondence with the conjugacy classes of
homomorphisms from $U(1)$ to $U(n)$, which are the closed geodesics through the identity of $U(n)$, and that they are isolated. These results also hold for any compact Lie group $G$ in
place of $U(n)$.

\begin{proposition} Yang-Mills connections on $S^2$ are isolated. More precisely, the space $\mathrm{Hom}(U(1),G) / G $ of equivalence classes of Yang-Mills connections is discrete.
\end{proposition}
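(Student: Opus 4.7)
The plan is to reduce this to a standard structural fact about compact Lie groups. First, by Theorem~2.2 together with Theorem~2.3, equivalence classes of Yang-Mills connections on $S^2$ correspond bijectively to conjugacy classes of continuous homomorphisms $\phi : U(1) \to G$. I would then observe that every continuous homomorphism between Lie groups is smooth, so each such $\phi$ is determined by its infinitesimal generator $X \in \mathfrak{u}(n)$ via $\phi(e^{i\theta}) = \exp(\theta X)$, with the compatibility with the identification $U(1) = \R/2\pi\Z$ amounting to the periodicity condition $\exp(2\pi X) = I$. Equipping $\mathrm{Hom}(U(1),G)$ with the compact-open topology, the correspondence $\phi \leftrightarrow X$ becomes a homeomorphism onto the closed subset
\[ \Lambda := \{X \in \mathfrak{u}(n) : \exp(2\pi X) = I\} \subset \mathfrak{u}(n). \]
The proposition will then reduce to showing that $\Lambda/G$, where $G$ acts by the adjoint representation, is discrete.

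To establish the latter, I would fix a maximal torus $T \subset G$ with Lie algebra $\mathfrak{t}$ and invoke two standard facts from the structure theory of compact Lie groups: every adjoint orbit of $G$ on its Lie algebra meets $\mathfrak{t}$, and two elements of $\mathfrak{t}$ lie in the same adjoint orbit precisely when they are related by the finite Weyl group $W = N_G(T)/T$. Together these identify $\Lambda/G$ with $(\Lambda \cap \mathfrak{t})/W$. Now $\exp : \mathfrak{t} \to T$ is a covering homomorphism of abelian groups with discrete kernel $\Lambda_T$ (the integer lattice of $T$), so $\Lambda \cap \mathfrak{t} = \tfrac{1}{2\pi}\Lambda_T$ is a discrete sublattice of $\mathfrak{t}$. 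Quotienting a discrete set by the action of a finite group leaves a discrete set, so $\Lambda/G$ will be discrete, as required.

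No serious obstacle arises once Theorems~2.2 and~2.3 are in hand; the argument combines two classical facts, namely that continuous one-parameter subgroups of a Lie group are smooth and determined by a single Lie algebra element, and that the kernel of the exponential on a compact torus is a lattice. The one step deserving care is the identification of the compact-open topology on $\mathrm{Hom}(U(1),G)$ with the subspace topology on $\Lambda \subset \mathfrak{u}(n)$, which follows from the local inverse of $\exp$ near the origin together with the fact that $\phi$ is globally recovered from its germ at $1$. As a byproduct, the elements of $\Lambda$ are recognized as the initial velocities at the identity of closed geodesics for a bi-invariant metric on $G$, recovering the geometric interpretation mentioned in the introduction.
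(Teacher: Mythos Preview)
Your argument is correct (aside from the harmless slip of writing $\mathfrak{u}(n)$ and $I$ where the general statement calls for $\mathfrak{g}$ and the identity of $G$; since the image of $U(1)$ lands in the identity component of $G$, assuming $G$ connected costs nothing). But your route is genuinely different from the paper's. The paper argues infinitesimally: for Lie groups $H$ and $G$ the Zariski tangent space to $\mathrm{Hom}(H,G)/G$ at $[\rho]$ is the group cohomology $H^1(H,\mathfrak{g})$ with $H$ acting via $\mathrm{Ad}\circ\rho$, and this vanishes whenever $H$ is compact---so every class is infinitesimally rigid and the quotient is zero-dimensional. That proof uses only the compactness of the \emph{source} $U(1)$ and nothing about $G$. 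Your proof is global and structural: you parametrize $\mathrm{Hom}(U(1),G)$ by the integer lattice in a Cartan subalgebra and pass to the Weyl quotient. This buys you more than discreteness---you actually identify the set of components as cocharacter-lattice points modulo $W$ and make the closed-geodesic interpretation explicit---but at the cost of invoking maximal-torus/Weyl theory and being specific to the source $U(1)$, whereas the paper's cohomological argument would apply verbatim with any compact $H$ in place of $U(1)$. The one point you should firm up is that $\Lambda/G$ (quotient topology from the non-discrete $\Lambda\subset\mathfrak{g}$) really agrees with $(\Lambda\cap\mathfrak{t})/W$; the cleanest way is to cite the standard homeomorphism $\mathfrak{g}/\mathrm{Ad}(G)\cong\mathfrak{t}/W$ for compact connected $G$.
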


\begin{proof}  If $H$ and $G$ are Lie groups, then the Zariski tangent space at the equivalence class 
$[\rho]$ $\in$ $\mathrm{Hom}(H,G) / G$ can be identified with $H^1(H,\mathfrak{g})$ where $\mathfrak{g}$ is an $H$-module via $\mathrm{Ad} \circ \rho$ and $\mathrm{Ad}$ is the adjoint action of $G$ on $\mathfrak{g}$. Now when $H$ is compact its cohomology groups vanish, and $\mathrm{Hom}(H,G) / G$ has zero-dimensional tangent spaces. Notice that it is the compactness of $U(1)$ that matters and not the compactness of $G$.
\end{proof}


\begin{thebibliography}{999}
\bibitem[AB]{AB} M. F. Atiyah and R. Bott, \emph{The Yang-Mills equations over Riemann surfaces.} Philos. Trans. Roy. Soc. London Ser. A. \textbf{308} (1982), 523--615.
\bibitem [K]{K} S. Kobayashi, \emph{La connexion des vari\'{e}t\'{e}s fibr\'{e}es I and II.} C. R. Acad. Sci. Paris. \textbf{238} (1954) 318--319, 443--444.
\bibitem [KN]{KN} S. Kobayashi and K. Nomizu, \emph{Foundations of Differential Geometry}, vol. 1. New York; Wiley Interscience, 1963.
\bibitem [FH]{FH} Th. Friedrich and L. Habermann, \emph{Yang-Mills equations on the two-dimensional sphere}. Comm. Math. Phys. \textbf{100} (1985) 231--243.
\end{thebibliography}
\end{document}